\newtheorem{theorem}{Theorem}
\newtheorem{maintheorem}{Theorem}
\newtheorem{lemma}[theorem]{Lemma}
\newtheorem{corollary}[theorem]{Corollary}
\newtheorem{maincorollary}[maintheorem]{Corollary}
\theoremstyle{remark}
\newtheorem{example}[theorem]{Example}
\theoremstyle{definition}
\newtheorem{definition}[theorem]{Definition}
\numberwithin{theorem}{section}
\begin{document}
  \title{
    Contractibility of the geometric stability manifold of a surface
  }
  \author{
    Nick Rekuski
  }
  \address{
    Department of Mathematical Sciences, University of Liverpool, Liverpool, L69 7ZL,
United Kingdom
  }
  \email{
    rekuski@liverpool.ac.uk
  }
  \date{
    May 23, 2024
  }
  \thanks{
    The author was supported by the U.S. Department of Energy, Office of Science, Basic Energy Sciences, under Award Number DE-SC-SC0022134 and an OVPR Postdoctoral Award at Wayne State University.
  }
  \subjclass[2020]{Primary 14F08, 14J60, 18G80}
  \keywords{Bridgeland stability, geometric stability conditions, contractibility, homotopy type}

  \begin{abstract}
    Using a recent description of the geometric stability manifold, we show the geometric stability manifold associated to any smooth projective complex surface is contractible.
    We then use this result to demonstrate infinitely many new families of surfaces whose stability manifold is contractible.
  \end{abstract}

  \maketitle

\section{Introduction}
   On a triangulated category, $\mathcal{T}$, Bridgeland defined the notion of a stability condition which establishes a framework to study moduli of complexes of sheaves~\cite{bridgeland2007}.
   The collection of all stability conditions forms a complex manifold (with possibly infinitely many connected components), $\operatorname{Stab}(\mathcal{T})$.
   Stability conditions and the stability manifold have been used to obtain new results in homological mirror symmetry, representation theory, symplectic geometry, and moduli of stable sheaves.
   Furthermore, $\operatorname{Stab}(\mathcal{T})$ has an inherent richness that is worthy of study in its own right.
   
  Recent work has focused on understanding the topology of $\operatorname{Stab}(\mathcal{T})$.
  This focus is partially inspired by a conjecture of Bridgeland that states if $X$ is a $K3$ surface then a distinguished component of $\operatorname{Stab}(X)$---the stability manifold of $D^{b}(X)$---is simply-connected~\cite{bridgeland2008}*{Conjecture 1.2}.
  A positive answer to this conjecture would shed light on the group of exact autoequivalences of the bounded derived category of a $K3$ surface.
  More generally, there is a folklore conjecture that if $\operatorname{Stab}(\mathcal{T})$ is nonempty then it is homotopy discrete.
  
  In this article we consider the case when $\mathcal{T}=D^{b}(X)$ for $X$ a smooth projective complex variety.
  In practice, contractibility results for $\operatorname{Stab}(X)$ first give a homotopy equivalence to a certain natural submanifold, $\operatorname{Stab}^{\mathrm{geo}}(X)$ (called the geometric stability manifold), then show $\operatorname{Stab}^{\mathrm{geo}}(X)$ is contractible (we give an overview of such results in the following subsection).
  Both steps of this procedure depend on the geometry of $X$, so with current techniques it is intractable to show $\operatorname{Stab}(X)$ is contractible in general.
  
  The main result of this article shows, in the case of surfaces, the second step of this procedure can be performed in general.
  
    \begin{maintheorem}[\ref{theorem:mainTheoremNumbered}]
    \label{thm:mainTheorem}
    If $X$ is a smooth projective complex surface then $\operatorname{Stab}^{\mathrm{geo}}(X)$ is contractible.
  \end{maintheorem}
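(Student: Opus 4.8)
The plan is to use the recent description of $\operatorname{Stab}^{\mathrm{geo}}(X)$ alluded to in the introduction. For a smooth projective surface, every geometric stability condition has heart obtained by tilting $\operatorname{Coh}(X)$ at a torsion pair determined by a choice of ample (or, more generally, movable/positive) polarization, together with a choice of twist and a central charge datum. Concretely, I expect $\operatorname{Stab}^{\mathrm{geo}}(X)$ to fiber—or at least admit a continuous map with contractible fibers—over a product of the form $\operatorname{NS}(X)_{\mathbb{R}} \times (\text{ample-type cone}) \times (\text{fiber of normalizations})$, where the ample-type cone is an open convex cone in $\operatorname{NS}(X)_{\mathbb{R}}$ and hence contractible, $\operatorname{NS}(X)_{\mathbb{R}}$ is a vector space and hence contractible, and the remaining data (the real and imaginary parts of the central charge beyond what the polarization fixes, e.g. the $\mathbb{C}^{*}$-action direction and the ``$s$-coordinate'') lives in another contractible space. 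So the first step is to pin down precisely this parametrization and record that each coordinate factor is contractible.

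The second step is to upgrade the set-theoretic bijection to a homotopy equivalence. Rather than proving a homeomorphism outright—which would require controlling the topology on both sides carefully—I would exhibit an explicit deformation retraction of $\operatorname{Stab}^{\mathrm{geo}}(X)$ onto a single point, built coordinate by coordinate using the parametrization. That is: first contract the $\mathbb{C}^{*}$-orbit direction (the $\mathbb{C}$-action on stability conditions is free and its orbits are copies of $\mathbb{C}$, contractible), then linearly retract the $B$-field/NS-twist coordinate to $0$, then retract the polarization within its (convex, hence star-shaped) cone to a fixed ample class, and finally retract any residual normalization parameter. At each stage I must check that the path of data indeed yields geometric stability conditions for all $t \in [0,1]$—this is where the openness/convexity of the relevant cone of ``geometric'' central charges does the work—and that the resulting map $[0,1] \times \operatorname{Stab}^{\mathrm{geo}}(X) \to \operatorname{Stab}^{\mathrm{geo}}(X)$ is continuous for the metric (generalized-metric) topology on $\operatorname{Stab}$. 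Continuity in the Bridgeland topology typically follows from continuity of the central charges together with a uniform bound argument, or can be deferred to a cited lemma if the earlier sections establish that the parametrizing map is a homeomorphism.

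The main obstacle I anticipate is verifying that the intermediate stability conditions along the retraction remain \emph{geometric}—i.e. that skyscraper sheaves $\mathcal{O}_{x}$ remain stable of the same phase—throughout the homotopy, and more subtly, that this can be done uniformly/continuously in the surface parameters without case analysis on the Picard rank or the structure of $\operatorname{Nef}(X)$. If the earlier description already characterizes $\operatorname{Stab}^{\mathrm{geo}}(X)$ as \emph{exactly} the set of tilt-stability conditions for positive central charges on an open convex subset, then geometricity is automatic along convex combinations and the obstacle dissolves; the remaining work is then purely the bookkeeping of assembling the coordinate-wise retractions into one and checking continuity. A secondary subtlety is the behavior at the boundary of the ample cone versus the movable cone (whether geometric stability conditions require strictly ample polarizations or allow big-and-nef ones), which affects whether the target point of the retraction must be chosen in the interior—but since we only need contractibility, retracting to any fixed interior point suffices, so this does not create a genuine difficulty.
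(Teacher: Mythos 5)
Your setup is on target: $\operatorname{Stab}^{\mathrm{geo}}(X)$ is indeed parametrized (via the Fu--Li--Zhao/Dell description) by a $\mathbb{C}$-factor together with data $(H,D,\beta,\alpha)\in\operatorname{Amp}_{\mathbb{R}}(X)\times\operatorname{NS}_{\mathbb{R}}(X)\times\mathbb{R}\times\mathbb{R}$, and you correctly identify the crux: showing that the intermediate points of a coordinate-wise retraction remain geometric. But your proposed resolution of that crux does not work. The constraint cutting out the geometric locus is $\alpha>\Phi(H,D,\beta)$, where $\Phi$ is the (generalized) Le Potier function, and this function is \emph{not continuous} in general, let alone convex. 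So the parameter space is the strict epigraph of a discontinuous function, not ``an open convex subset,'' and the hypothesis under which you say ``the obstacle dissolves'' is simply not available. The strict epigraph of an arbitrary discontinuous function need not even be connected (take $f(z)=1/z$ for $z\neq 0$, $f(0)=0$, over $Z=\mathbb{R}$), so linear interpolation of the $(H,D,\beta)$-coordinates, or of $\alpha$, can exit the geometric locus, and no convexity argument rescues this.

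The missing idea, which is the actual content of the paper's proof, is that $\Phi$ admits an explicit \emph{continuous} upper bound: by a Bogomolov-type inequality, $\Phi(H,D,\beta)\leq\tfrac{1}{2}\bigl((\beta-\tfrac{D\cdot H}{H^{2}})^{2}-\tfrac{D^{2}}{H^{2}}\bigr)=:g(H,D,\beta)$. One then retracts the epigraph of $\Phi$ \emph{vertically}: first push $\alpha$ upward to $\max\{\alpha,g\}$ (this stays inside the geometric locus because $g\geq\Phi$ pointwise, and is continuous even though $\Phi$ is not), landing in the closed epigraph of the continuous function $g$, and then retract that onto the graph of $g$, which is homeomorphic to the contractible base $\operatorname{Amp}_{\mathbb{R}}(X)\times\operatorname{NS}_{\mathbb{R}}(X)\times\mathbb{R}$. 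Without producing such a continuous majorant (or some substitute for it), your retraction cannot be shown to preserve geometricity, and the proof does not go through.
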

  
  Using Theorem \ref{thm:mainTheorem} and a recent result of Fu--Li--Zhao, we produce infinitely many new surfaces with contractible stability manifold.
  
  \begin{maincorollary}[\ref{corollary:finiteAlbanese}]
    Suppose $X$ is a smooth projective complex surface.
    If the Albanese morphism of $X$ is finite onto its image then $\operatorname{Stab}(X)$ is contractible.
  \end{maincorollary}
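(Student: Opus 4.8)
The plan is to deduce Corollary~\ref{corollary:finiteAlbanese} formally from Theorem~\ref{thm:mainTheorem} together with the recent theorem of Fu--Li--Zhao. That theorem asserts: if $Y$ is a smooth projective complex variety whose Albanese morphism is finite onto its image, then every Bridgeland stability condition on $D^{b}(Y)$ is geometric. Equivalently, the submanifold $\operatorname{Stab}^{\mathrm{geo}}(Y) \subseteq \operatorname{Stab}(Y)$ contains every stability condition, so $\operatorname{Stab}(Y) = \operatorname{Stab}^{\mathrm{geo}}(Y)$ as topological spaces (indeed as complex manifolds). Applying this with $Y = X$ the surface of the hypothesis, and then invoking Theorem~\ref{thm:mainTheorem}, which says $\operatorname{Stab}^{\mathrm{geo}}(X)$ is contractible, we conclude that $\operatorname{Stab}(X)$ is contractible, since it coincides with $\operatorname{Stab}^{\mathrm{geo}}(X)$ as a space.

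In order I would: (i) cite Fu--Li--Zhao to record the equality $\operatorname{Stab}(X) = \operatorname{Stab}^{\mathrm{geo}}(X)$, after checking that their notion of a geometric stability condition agrees with the one underlying $\operatorname{Stab}^{\mathrm{geo}}(X)$ in this article, and that both stability manifolds are formed with respect to the same lattice (the numerical Grothendieck group equipped with the Euler pairing), so that the identification is literal rather than merely a homotopy equivalence; (ii) observe that $\operatorname{Stab}^{\mathrm{geo}}(X)$ is nonempty---the classical tilt-stability construction produces geometric stability conditions on any smooth projective surface---so that ``contractible'' is not vacuous (this is in any case implicit in Theorem~\ref{thm:mainTheorem}); (iii) apply Theorem~\ref{thm:mainTheorem} and transport contractibility across the identification of (i). It is worth noting in passing that, for a surface, the hypothesis already forces $X$ to have maximal Albanese dimension with finite Albanese morphism---a surface admits no finite morphism onto a curve or onto a point---so in particular $q(X) \geq 2$; this is precisely the range in which Fu--Li--Zhao applies.

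Essentially all of the mathematical content sits in Theorem~\ref{thm:mainTheorem}; Corollary~\ref{corollary:finiteAlbanese} itself is a bookkeeping consequence of the two inputs. Accordingly, the only place where care is needed---and what I would regard as the ``hard part'' of this short argument---is step (i): confirming that the Fu--Li--Zhao identification $\operatorname{Stab}(X) = \operatorname{Stab}^{\mathrm{geo}}(X)$ is stated for the same triangulated category, the same lattice, and the same central-charge normalization used here, so that it can be combined with Theorem~\ref{thm:mainTheorem} verbatim. I anticipate no further obstacles.
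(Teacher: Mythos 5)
Your proposal is correct and is essentially identical to the paper's own proof: cite Fu--Li--Zhao (\cite{fu2022}*{Theorem 1.1}) for the equality $\operatorname{Stab}(X)=\operatorname{Stab}^{\mathrm{geo}}(X)$ under the finite-Albanese hypothesis, then apply Theorem~\ref{theorem:mainTheoremNumbered}. The additional remarks about conventions and nonemptiness are sensible but not needed beyond what the citation already provides.
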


\subsection*{History}
  We give a historical overview of topological results for $\operatorname{Stab}(\mathcal{T})$.
  It is intractable to show $\operatorname{Stab}(\mathcal{T})$ is contractible directly.
  Instead, results typically focus on two natural submanifolds of $\operatorname{Stab}(\mathcal{T})$: the algebraic stability manifold, $\operatorname{Stab}^{\mathrm{alg}}(\mathcal{T})$, and the geometric stability manifold, $\operatorname{Stab}^{\mathrm{geo}}(\mathcal{T})$.
  The algebraic stability manifold consists of stability conditions arising from full exceptional collections of $\mathcal{T}$, and the geometric stability manifold consists of stability conditions where skyscraper sheaves are stable.
  We note some recent results have also considered a third submanifold consisting of Serre invariant stability conditions~\cites{pertusi2022,fan2023}.

  In many cases $\operatorname{Stab}^{\mathrm{geo}}(\mathcal{T})\cup\operatorname{Stab}^{\mathrm{alg}}(\mathcal{T})$ is a connected component of $\operatorname{Stab}(\mathcal{T})$.
  Therefore, the homotopy types of $\operatorname{Stab}^{\mathrm{alg}}(\mathcal{T})$ and $\operatorname{Stab}^{\mathrm{geo}}(\mathcal{T})$ provide important topological information about $\operatorname{Stab}(\mathcal{T})$.
  For example, if $\mathcal{T}=D^{b}(Q)$---the bounded derived category of representations of a quiver $Q$--and the underlying graph of $Q$ is simple and acyclic then $\operatorname{Stab}^{\mathrm{alg}}(Q)=\operatorname{Stab}(Q)$~\cite{otani2022}*{Theorem 1.1}.
  Similarly, if $\mathcal{T}=D^{b}(X)$ and $X$ has finite Albanese then $\operatorname{Stab}^{\mathrm{geo}}(X)=\operatorname{Stab}(X)$~\cite{fu2022}*{Theorem 1.1}.

  We give a brief overview of topological results involving $\operatorname{Stab}^{\mathrm{alg}}(\mathcal{T})$ and $\operatorname{Stab}^{\mathrm{geo}}(\mathcal{T})$.
  We first describe results for $\operatorname{Stab}^{\mathrm{geo}}(\mathcal{T})$.
  \begin{itemize}
    \item{
      If $\mathcal{T}=D^{b}(X)$ where $X$ is a $K3$ surface then $\operatorname{Stab}^{\mathrm{geo}}(\mathcal{T})$ is contractible~\cite{bridgeland2008}.
    }
    \item{
      If $\mathcal{T}=D^{b}(X)$ where $X$ is a smooth positive genus curve then $\operatorname{Stab}^{\mathrm{geo}}(\mathcal{T})=\operatorname{Stab}(\mathcal{T})$ is contractible~\cite{macri2007},
    }
    \item{
      If $\mathcal{T}=D^{b}(X)$ for an abelian surface $X$ then $\operatorname{Stab}^{\mathrm{geo}}(\mathcal{T})=\operatorname{Stab}(\mathcal{T})$ is simply connected and if the N{\'e}ron-Severi rank of $X$ is $1$ then $\operatorname{Stab}(\mathcal{T})$ is contractible~\cites{huybrechts2008,fu2022}.
    }
    \item{
      If $\mathcal{T}=D^{b}(X)$ where $X$ is an irregular surface of N{\'e}ron-Severi rank one then $\operatorname{Stab}^{\mathrm{geo}}(\mathcal{T})=\operatorname{Stab}(\mathcal{T})$ is contractible~\cite{fu2022}.
    }
    \item{
      If $\mathcal{T}=D^{b}(X)$ where $X$ is an abelian threefold of N{\'e}ron-Severi rank one then $\operatorname{Stab}^{\mathrm{geo}}(\mathcal{T})=\operatorname{Stab}(\mathcal{T})$ is contractible~\cite{fu2022}.
    }
  \end{itemize}
  
  We now discuss $\operatorname{Stab}^{\mathrm{alg}}(\mathcal{T})$ (see Figure \ref{figure:quiverTypes} for illustrations of the relevant quivers).
  \begin{itemize}
    \item{
      If $\mathcal{T}=D^{b}(Q)$ where $Q$ is a quiver of type $A_{n}$, $D_{n}$, or $E_{n}$ then $\operatorname{Stab}^{\mathrm{alg}}(\mathcal{T})=\operatorname{Stab}(\mathcal{T})$ is contractible~\cites{haiden2014,qiu2015,qiu2018,bridgeland2020}.
    }
    \item{
      If $\mathcal{T}=D^{b}(Q)$ where $Q$ is of type $\widetilde{A}_{2}$ then $\operatorname{Stab}^{\mathrm{alg}}(\mathcal{T})=\operatorname{Stab}(\mathcal{T})$ is contractible~\cites{dimitrov2016,dimitrov2018}.
    }
    \item{
      If $\mathcal{T}=D^{b}(K_{n})$ where $K_{n}$ is the Konecker quiver (including the case $\mathcal{T}=D^{b}(\mathbb{P}^{1})$) then $\operatorname{Stab}^{\mathrm{alg}}(\mathcal{T})=\operatorname{Stab}(\mathcal{T})$ is contractible~\cites{okada2006,macri2007,dimitrov2019}.
    }
    \item{
      If $\mathcal{T}=D^{b}(S_{3})$ (equivalently $\mathcal{T}=D^{b}(\mathbb{P}^{2})$) then $\operatorname{Stab}^{\mathrm{alg}}(\mathcal{T})$ is contractible~\cites{macri2007b,li2017}.
    }
    \item{
      If $\mathcal{T}=D^{b}(S_{4})$ (equivalently $\mathcal{T}=D^{b}(\mathbb{P}^{3})$) then $\operatorname{Stab}^{\mathrm{alg}}(\mathcal{T})$ is simply connected~\cite{chen2020}.
    }
    \item{
      If $\mathcal{T}=D^{b}(Q_{r,n,m})$ then $\operatorname{Stab}^{\mathrm{alg}}(\mathcal{T})=\operatorname{Stab}(\mathcal{T})$ is contractible~\cite{broomhead2016}.
    }
  \end{itemize}
  
  Some cases that do not strictly fit into the groupings above include:
  \begin{itemize}
    \item{
      If $\mathcal{T}=D^{b}(\mathbb{P}^{2})$ then there is a contractible connected component of $\operatorname{Stab}(\mathcal{T})$, $\operatorname{Stab}^{\dagger}(\mathcal{T})$, with $\operatorname{Stab}^{\dagger}(\mathcal{T})=\operatorname{Stab}^{\mathrm{geo}}(\mathcal{T})\cup\operatorname{Stab}^{\mathrm{geo}}(\mathcal{T})$~\cite{li2017}.
    }
    \item{
      If $\mathcal{T}=D^{b}(X)$ where $X$ is a $K3$ surface of Picard rank one then the connected component of $\operatorname{Stab}(\mathcal{T})$ containing $\operatorname{Stab}^{\mathrm{geo}}(\mathcal{T})$ is contractible~\cite{bayer2017}.
    }
    \item{
      If $\mathcal{T}=D^{b}(X)$ where $X$ is a $K3$ surface of Picard rank zero (in particular, $X$ is not projective) then a $\operatorname{Stab}(\mathcal{T})$ is simply connected~\cite{huybrechts2008}.
    }
    \item{
      If $\mathcal{T}=D^{b}_{\mathbb{P}^{2}}(\operatorname{Tot}(\mathscr{O}_{\mathbb{P}^{2}}(-3)))$ then a distinguished connected component of $\operatorname{Stab}(\mathcal{T})$ is simply connected~\cite{bayer2011}.
    }
  \end{itemize}
  
\begin{figure}
  \label{figure:quiverTypes}
  \begin{center}
    $A_{n}$:
    \begin{tikzcd}
      1\arrow[r,dash] & 2\arrow[r,dash] & \cdots\arrow[r,dash] & n
    \end{tikzcd}
    \qquad
    $K_{n}$:
    \begin{tikzcd}
      1\arrow[rr,bend left=60,"1"]\arrow[rr,bend left=20,"2"]\arrow[rr,bend right=10,draw=none,"\vdots" description]\arrow[rr,bend right=60,swap,"n"] & & 2
    \end{tikzcd}
    
    $D_{n}$:
    \begin{tikzcd}
      1\arrow[r,dash] & 2\arrow[r,dash]\arrow[d,dash] & 3\arrow[r,dash] &\cdots \arrow[r,dash] & n-1 \\
       & n & & &
    \end{tikzcd}
    \qquad
    $\widetilde{A}_{2}$:
    \begin{tikzcd}
      1\arrow[rr,dash]\arrow[dr,dash] & & 2\arrow[dl,dash]\\
       & 3 &
    \end{tikzcd}
    
    $E_{n}$ ($n=6,7,8$):
    \begin{tikzcd}
      1\arrow[r,dash] & 2\arrow[r,dash] & 3\arrow[r,dash]\arrow[d,dash] & 4\arrow[r,dash] &\cdots\arrow[r,dash] & n-1 \\
       & & n & & & 
    \end{tikzcd}
    
    $S_{n}$:
    \begin{tikzcd}
      1\arrow[rr,bend left=60,"\phi_{1}^{1}"]\arrow[rr,bend left=20,"\phi_{1}^{2}"]\arrow[rr,bend right=60,swap,"\phi_{1}^{n}"]\arrow[rr,bend right=10,draw=none,"\vdots" description] & & 2\arrow[rr,bend left=60,"\phi_{2}^{1}",shorten >=3ex]\arrow[rr,bend left=20,"\phi_{2}^{2}",shorten >=3ex]\arrow[rr,bend right=10,draw=none,"\vdots" description,shorten >=3ex]\arrow[rr,bend right=60,swap,"\phi_{2}^{n}",shorten >=3ex] & & \cdots\arrow[rr,bend left=60,"\phi_{n-1}^{1}",shorten <=3ex]\arrow[rr,bend left=20,"\phi_{n-1}^{2}",shorten <=3ex]\arrow[rr,bend right=60,swap,"\phi_{n-1}^{n}",shorten <=3ex]\arrow[rr,bend right=10,draw=none,"\vdots" description,shorten <=3ex] & & n
    \end{tikzcd}
    with $\phi_{i+1}^{j}\phi_{i}^{k}=\phi_{i+1}^{k}\phi_{i}^{j}$.
    
    $Q_{r,n,m}$:
    \begin{tikzcd}
       &  & &  & 2\arrow[dr,bend left=45,shorten >=2ex,"\gamma_{2}"] &\\
      1\arrow[r] & 2\arrow[r] & \cdots\arrow[r] & m\arrow[ur,bend left=45,"\gamma_{1}"] & & \vdots\arrow[dl,bend left=45,shorten <=2ex,"\gamma_{n-1}"] \\
      &  & &  & n\arrow[ul,bend left=45,"\gamma_{n}"] &\\
    \end{tikzcd}
    with $\gamma_{n-i+1}\gamma_{n-i+2}=0$ for all $0\leq i\leq r$.
    
    \caption{
      Illustration of relevant quivers.
    }
  \end{center}
  \end{figure}
 
\subsection*{Notation and Assumptions}
  Suppose $X$ is a smooth projective complex surface.
  We write $\operatorname{NS}(X)$ for the N{\'e}ron-Severi group of $X$ and $\operatorname{Amp}(X)$ for the ample cone.
  We also write $\operatorname{NS}_{\mathbb{R}}(X)=\operatorname{NS}(X)\otimes\mathbb{R}$ and $\operatorname{Amp}_{\mathbb{R}}(X)=\operatorname{Amp}(X)\otimes\mathbb{R}$.
  We use greek letters (e.g. $\alpha,\beta,\phi$) for real numbers.
  
  The bounded derived category of $\operatorname{Coh}(X)$ will be written $D^{b}(X)$ and the numerical Grothendieck group of $X$ will be written $K_{\operatorname{num}}(X)$.
  We use capital script letters (e.g. $\mathscr{E}$) for coherent sheaves on $X$.
  We use capital letters (e.g. $E$, $F$) for chain complexes in $D^{b}(X)$.
  We write $\operatorname{ch}_{i}(\mathscr{E})$ for the $i$th Chern character of $\mathscr{E}$ viewed as an element of $H^{2i}(X,\mathbb{Q})$.
  
\subsection*{Acknowledgments}
  The author is thankful to Hannah Dell, Rajesh Kulkarni, and Andrew Salch for discussions related to this work.

\section{Preliminaries}
\label{section:preliminaries}
  We recall background on stability conditions.
  
  \begin{definition}
    A \emph{stability condition} on $D^{b}(X)$ (also called a \emph{Bridgeland stability condition}) is a pair $\sigma=(Z,\mathcal{P})$ where $Z:K_{\operatorname{num}}(X)\to\mathbb{C}$ is a group homomorphism, called a \emph{central charge}, and $\{\mathcal{P}(\phi)\}_{\phi\in\mathbb{R}}$ is a collection of full subcategories of $D^{b}(X)$, called a \emph{slicing}, satisfying the following conditions
    \begin{itemize}
      \item{
        If $E\in\mathcal{P}(\phi)\setminus\{0\}$ then $Z([E])\in\mathbb{R}_{>0}\exp(i\pi\phi)$---the positive real ray spanned by $\exp(i\pi\phi)$,
      }
      \item{
        If $\phi\in\mathbb{R}$ then $\mathcal{P}(\phi+1)=\mathcal{P}(\phi)[1]$,
      }
      \item{
        If $\phi_{1}>\phi_{2}$ then $\operatorname{Hom}_{D^{b}(X)}(\mathcal{P}(\phi_{1}),\mathcal{P}(\phi_{2}))=0$,
      }
      \item{
        If $E\in D^{b}(X)\setminus\{0\}$ then there exists real numbers $\phi_{1}>\phi_{2}>\cdots>\phi_{n}$ and a sequence of distinguished triangles
        \[
          \begin{tikzcd}
            0\arrow[r] & E_{1}\arrow[r]\arrow[d] & E_{2}\arrow[r]\arrow[d] & \cdots\arrow[r] & E_{n-1}\arrow[r]\arrow[d] & E\arrow[d] \\
             & F_{1}\arrow[ul,dashed] & F_{2}\arrow[ul,dashed] & \cdots & F_{n-1}\arrow[ul,dashed] & F_{n}\arrow[ul,dashed]
          \end{tikzcd}
        \]
        where $F_{i}\in\mathcal{P}(\phi_{i})$, and
      }
      \item{
        $\displaystyle \inf_{\phi\in\mathbb{R}}\left\{\frac{|Z([E])|}{||[E]||}:E\in\mathcal{P}(\phi)\setminus\{0\}\right\}>0$ for some norm $||\cdot||$ on $K_{\mathrm{num}}(X)$.
      }
    \end{itemize}
    
    If $\sigma$ is a stability condition and $E\in\mathcal{P}(\phi)\setminus\{0\}$ then we say $E$ is \emph{$\sigma$-semistable of phase $\phi$}.
    If $E\in\mathcal{P}(\phi)\setminus\{0\}$ is simple in $\mathcal{P}(\phi)$ then we say $E$ is \emph{$\sigma$-stable of phase $\phi$}.
  \end{definition}
  
  We write the set of all stability conditions on $D^{b}(X)$ as $\operatorname{Stab}(X)$.
  Bridgeland described a natural metric on $\operatorname{Stab}(X)$ so that $\operatorname{Stab}(X)$ is a complex manifold (with possibly infinitely many connected components)~\cite{bridgeland2007}*{Theorem 1.2}.
  For our purposes, we restrict our attention to an open submanifold of $\operatorname{Stab}(X)$ where all skyscraper sheaves are stable.
  
  \begin{definition}
    \label{definition:geometricStabilityCondition}
    A stability condition on $\sigma\in\operatorname{Stab}(X)$ is said to be \emph{geometric} if the skyscraper sheaf $\mathscr{O}_{x}$ is $\sigma$-stable for all $x\in X$.
    We write the collection of all geometric stability conditions on $D^{b}(X)$ as $\operatorname{Stab}^{\mathrm{geo}}(X)$.
  \end{definition}
  
  Commonly, geometric stability conditions are defined to be stability conditions where skyscraper sheaves are all stable of the same phase.
  We have dropped this additional assumption from Definition \ref{definition:geometricStabilityCondition} because~\cite{fu2022}*{Proposition 2.9} has shown it is already implied by the stability assumption.
  
  The subspace $\operatorname{Stab}^{\mathrm{geo}}(X)$ is open in $\operatorname{Stab}(X)$, so each connected component of $\operatorname{Stab}^{\mathrm{geo}}(X)$ is a complex submanifold of some connected component of $\operatorname{Stab}(X)$~ \cite{bridgeland2008}*{Proposition 9.4}.
  
  The explicit description of $\operatorname{Stab}^{\mathrm{geo}}(X)$ relies on a generalization of the Le Potier function.
  We first recall this function then the description of $\operatorname{Stab}^{\mathrm{geo}}(X)$.
  
  \begin{definition}[\cite{fu2022}*{Definition 3.1},\cite{dell2023}*{Definition 5.8}]
    \label{definition:generalizedLePotier}
    Suppose $X$ is a smooth projective complex surface.
    We define the \emph{Le Potier function} $\Phi:\operatorname{Amp}_{\mathbb{R}}(X)\times\operatorname{NS}_{\mathbb{R}}(X)\times\mathbb{R}\to\mathbb{R}$ as
    \[
      \Phi(H,D,\beta)=\limsup_{\mu\to\beta}\left\{\frac{\operatorname{ch}_{2}(\mathscr{E})-D\cdot\operatorname{ch}_{1}(\mathscr{E})}{H^{2}\operatorname{rank}(\mathscr{E})}:
      \begin{array}{l}
        \mathscr{E}\in\operatorname{Coh}(X) \ \text{is slope stable (with respect} \\
        \text{to H) and} \ (H\cdot\operatorname{ch}_{1}(\mathscr{E}))/\operatorname{rank}(\mathscr{E})=\mu
      \end{array}{}\right\}.
    \]
  \end{definition}
  
  We now give the description of $\operatorname{Stab}^{\mathrm{geo}}(X)$.
  
  \begin{theorem}[\cite{fu2022}*{Proposition 3.6}, \cite{dell2023}*{Theorem 5.10}]
    \label{contractabilityMainResult}
    Suppose $X$ is a smooth projective complex surface.
    There is a homeomorphism
    \[
      \operatorname{Stab}^{\mathrm{geo}}(X)\cong\mathbb{C}\times\left\{(H,D,\beta,\alpha)\in\operatorname{Amp}_{\mathbb{R}}(X)\times\operatorname{NS}_{\mathbb{R}}(X)\times\mathbb{R}\times\mathbb{R}:\Phi(H,D,\beta)<\alpha\right\}.
    \]
  \end{theorem}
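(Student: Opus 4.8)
The plan is to construct the homeomorphism explicitly by producing a geometric stability condition from each quadruple $(H,D,\beta,\alpha)$ on the right-hand side and, conversely, recovering this data from an arbitrary geometric stability condition; the $\mathbb{C}$-factor will absorb the freedom in phase and scale. First I would isolate the $\mathbb{C}$-factor using the standard action of $\mathbb{C}\subseteq\widetilde{\operatorname{GL}}^{+}(2,\mathbb{R})$ on $\operatorname{Stab}(X)$ by simultaneous rotation and rescaling of the central charge. This action preserves $\operatorname{Stab}^{\mathrm{geo}}(X)$ (it sends stable skyscrapers to stable skyscrapers), and it is free and proper there because the skyscrapers are stable of nonzero central charge. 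Choosing the global slice on which every $\mathscr{O}_{x}$ has central charge $-1$ and phase $1$ yields $\operatorname{Stab}^{\mathrm{geo}}(X)\cong\mathbb{C}\times\Sigma$, reducing the problem to identifying the slice $\Sigma$ with $\{\Phi(H,D,\beta)<\alpha\}$.

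Next I would classify the stability conditions in $\Sigma$. Since every $\mathscr{O}_{x}$ is stable of phase $1$, a now-standard argument (stability of \emph{all} skyscrapers rigidifies the t-structure relative to the standard one on $D^{b}(X)$) shows the heart $\mathcal{P}((0,1])$ is forced to be a tilt $\operatorname{Coh}^{\beta}(X)$ of $\operatorname{Coh}(X)$ at the torsion pair cut out by $H$-slope stability relative to the parameter $\beta$. As $Z$ is a homomorphism on $K_{\operatorname{num}}(X)$, which for a surface is detected by $(\operatorname{rank},\operatorname{ch}_{1},\operatorname{ch}_{2})$, the phase-$1$ normalization of the skyscrapers together with the tilt forces
\[
  Z_{H,D,\beta,\alpha}(\mathscr{E})=\bigl(-\operatorname{ch}_{2}(\mathscr{E})+D\cdot\operatorname{ch}_{1}(\mathscr{E})+\alpha H^{2}\operatorname{rank}(\mathscr{E})\bigr)+i\,H\cdot\bigl(\operatorname{ch}_{1}(\mathscr{E})-\beta H\operatorname{rank}(\mathscr{E})\bigr),
\]
so that $\Sigma$ is parametrized by the pairs $(Z_{H,D,\beta,\alpha},\operatorname{Coh}^{\beta}(X))$, depending precisely on $(H,D,\beta,\alpha)$. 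Both assignments are manifestly continuous, and continuity of the inverse follows from Bridgeland's metric description of $\operatorname{Stab}(X)$, so once well-definedness is settled the maps are mutually inverse homeomorphisms.

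The crux is the equivalence ``$(Z_{H,D,\beta,\alpha},\operatorname{Coh}^{\beta}(X))$ is a geometric stability condition $\iff\Phi(H,D,\beta)<\alpha$.'' For the forward direction, a slope-stable sheaf $\mathscr{E}$ of twisted slope $\beta$ lies in the torsion-free part, so $\mathscr{E}[1]\in\operatorname{Coh}^{\beta}(X)$ has $\operatorname{Im}Z_{H,D,\beta,\alpha}(\mathscr{E}[1])=0$; the requirement that nonzero objects of the heart have phase in $(0,1]$ forces $\operatorname{Re}Z_{H,D,\beta,\alpha}(\mathscr{E}[1])<0$, i.e.\ $(\operatorname{ch}_{2}(\mathscr{E})-D\cdot\operatorname{ch}_{1}(\mathscr{E}))/(H^{2}\operatorname{rank}(\mathscr{E}))<\alpha$. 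Taking the $\limsup$ over slope-stable sheaves with slope tending to $\beta$ gives exactly $\Phi(H,D,\beta)\leq\alpha$, and the strictness is pinned down by demanding no object lie on the positive real axis (which would violate the stability-function property and destabilize the skyscrapers). For the converse, assuming $\Phi(H,D,\beta)<\alpha$ I would verify the Harder--Narasimhan property and, most delicately, the support property: one exhibits a quadratic form on $K_{\operatorname{num}}(X)$, negative on $\ker Z$ and nonnegative on all $Z$-semistable objects, whose existence is guaranteed because $\Phi<\alpha$ encodes precisely the Bogomolov--Gieseker-type inequality for the slope-stable sheaves generating $\operatorname{Coh}^{\beta}(X)$; one then bootstraps from these generators to all semistable objects of the tilted heart.

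I expect the main obstacle to be this converse support-property verification. The difficulty is that $\Phi$ is a $\limsup$ over the infinite family of slope-stable sheaves whose slope approaches $\beta$, so the \emph{strict} inequality $\Phi<\alpha$ must be leveraged into a \emph{uniform} positivity — the $\inf>0$ clause in the definition of a stability condition — rather than a merely pointwise one. Ensuring that no sequence of semistable objects asymptotically degenerates the support inequality, and simultaneously checking that stability of the skyscrapers $\mathscr{O}_{x}$ is neither gained nor lost as $\alpha$ crosses the value $\Phi(H,D,\beta)$, is where the genuine analytic content lies.
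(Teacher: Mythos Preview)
Your proposal is correct and follows essentially the same route as the paper's sketch: normalize via the $\mathbb{C}\subset\widetilde{\operatorname{GL}}^{+}(2,\mathbb{R})$ action so that skyscrapers have phase $1$, identify the heart as a tilt $\operatorname{Coh}^{\beta}(X)$ so that the central charge is determined by $(H,D,\beta,\alpha)$, and then characterize the image as $\{\Phi(H,D,\beta)<\alpha\}$ by verifying the support property via an explicit quadratic form. The paper records exactly this outline (citing \cite{bridgeland2008}*{Proposition 10.3, Section 10}, \cite{fu2022}*{Proposition 3.6}, and \cite{dell2023}*{Theorem 5.5, Lemma 5.32}), and your identification of the support-property step as the analytic crux is on point.
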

  
  We broadly sketch the proof of this theorem.
  In the case of surfaces, geometric stability conditions are determined by their central charge~\cite{bridgeland2008}*{Proposition 10.3}.
  By considering the central charge of a skyscraper sheaf and ideal sheaves of curves there is a continuous injection $i:\operatorname{Stab}^{\mathrm{geo}}(X)\to\mathbb{C}\times\operatorname{Amp}_{\mathbb{R}}(X)\times\operatorname{NS}_{\mathbb{R}}(X)\times\mathbb{R}\times\mathbb{R}$~\cite{bridgeland2008}*{Section 10}, \cite{fu2022}*{Proposition 3.6},\cite{dell2023}*{Theorem 5.5}.
  By directly constructing a quadratic form, one shows the support property is satisfied by exactly $(\lambda,H,D,\beta,\alpha)\in i(\operatorname{Stab}^{\mathrm{geo}}(X))$ satisfying $\Phi(H,D,\beta)<\alpha$~\cite{dell2023}*{Lemma 5.32}.
  
\section{Contractibility of the Geometric Stability Manifold}
  Contractibility of $\operatorname{Stab}^{\mathrm{geo}}(X)$ will follow from Theorem \ref{contractabilityMainResult} and the following topological result.
  This lemma shows the region above the graph of a ``sufficiently nice" function is contractible.
  Note we cannot assume this function is continuous because the Le Potier function is not continuous in general!
  
  \begin{lemma}
    \label{lemma:contractibleContinuous}
    Suppose $Z$ is a topological space and $f:Z\to\mathbb{R}$ is a function (not necessarily continuous).
    Define
    \[
      \Gamma_{f}^{<}=\{(z,\alpha)\in Z\times\mathbb{R}\mid f(z)<\alpha\}
    \]
    which we give the subspace topology.
    If there exists a continuous function $g:Z\to\mathbb{R}$ satisfying $f(z)\leq g(z)$ for all $z\in Z$ then $\Gamma_{f}^{<}$ and $Z$ are homotopy equivalent.
  \end{lemma}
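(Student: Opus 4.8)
The plan is to realize $Z$ as a (strong deformation) retract of $\Gamma_f^<$ via an explicit straight-line homotopy in the $\mathbb{R}$-coordinate. The continuous function $g$ lets us push everything up to a fixed height above its graph: since $f(z)\le g(z)<g(z)+1$ for all $z\in Z$, the map $s\colon Z\to\Gamma_f^<$, $s(z)=(z,g(z)+1)$, is a well-defined continuous section of the projection $p\colon\Gamma_f^<\to Z$, so that $p\circ s=\operatorname{id}_Z$. Thus it only remains to construct a homotopy inside $\Gamma_f^<$ from $\operatorname{id}_{\Gamma_f^<}$ to $s\circ p$.

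For that I would take
\[
  H\big((z,\alpha),t\big)=\big(z,\;(1-t)\alpha+t\,(g(z)+1)\big),\qquad t\in[0,1],
\]
which is continuous as a map $\Gamma_f^<\times[0,1]\to Z\times\mathbb{R}$ (all ingredients---$g$, the two projections, addition and scalar multiplication---are continuous), restricts to $\operatorname{id}$ at $t=0$ and to $s\circ p$ at $t=1$, and fixes the graph of $g+1$ at all times. The only point needing verification is that $H$ lands in $\Gamma_f^<$, i.e.\ that $(1-t)\alpha+t\,(g(z)+1)>f(z)$ for every $t\in[0,1]$; but this is immediate because $\alpha>f(z)$ (as $(z,\alpha)\in\Gamma_f^<$) and $g(z)+1>f(z)$ (as $g(z)\ge f(z)$), and a convex combination of two numbers each strictly exceeding $f(z)$ again strictly exceeds $f(z)$.

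Granting this, $H$ corestricts to a homotopy $\Gamma_f^<\times[0,1]\to\Gamma_f^<$ exhibiting $s\circ p\simeq\operatorname{id}_{\Gamma_f^<}$, so $p$ and $s$ are mutually inverse homotopy equivalences and $\Gamma_f^<\simeq Z$. I do not expect a genuine obstacle here: the hypotheses are used only to make $s$ well-defined and to keep the interpolation above the graph of $f$, and---crucially---the possibly discontinuous function $f$ never enters the formulas for $s$, $p$, or $H$, only the inequality cutting out the subspace, so its lack of continuity is harmless.
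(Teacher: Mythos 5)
Your proof is correct and uses essentially the same idea as the paper: exploit the continuous ceiling $g$ (shifted by $1$) to push points vertically, staying above the graph of $f$ because a convex combination of numbers strictly greater than $f(z)$ is strictly greater than $f(z)$. The only difference is cosmetic---the paper performs two successive deformation retractions (first onto $\{(z,\beta):g(z)\le\beta\}$, then onto the graph of $g$), whereas you compress this into a single straight-line homotopy onto the graph of $g+1$; both are valid.
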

  \begin{proof}
    By replacing $g(z)$ with $g(z)+1$ we may assume $f(z)<g(z)$ (this assumption makes notation simpler).
    Since $f(z)<g(z)$ for all $z\in Z$, the function $F:\Gamma_{f}^{<}\times [0,1]\to\Gamma_{f}^{<}$ defined by
    \[
      F((z,\alpha),t)=(z,\max\{\alpha,(g(z)-\alpha)t+\alpha\})
    \]
    is well-defined and continuous.
    Moreover, since $\max\{\alpha,g(z)\}\geq g(z)$, $F$ is a strong deformation retract of $\Gamma_{f}^{<}$ onto
    \[
      \Gamma_{g}^{\leq}=\{(z,\beta)\in Z\times\mathbb{R}\mid g(z)\leq\beta\}.
    \]
    Furthermore, the function $G:\Gamma_{g}^{\leq}\times [0,1]\to\Gamma_{g}^{\leq}$ defined by
    \[
      G((z,\beta),t)=\left(z,\beta(1-t)+g(z)t\right)
    \]
    is a strong deformation retract of $\Gamma_{g}^{\leq}$ onto the graph of $g$.
    Since $g$ is continuous, the graph of $g$ is homeomorphic to $Z$.
    Hence, we have shown $\Gamma_{f}^{<}$ is homotopy equivalent to $Z$, as desired.
  \end{proof}
  
  Lemma \ref{lemma:contractibleContinuous} does not generalize to arbitrary functions $f:Z\to\mathbb{R}$.
  For example, if $Z=\mathbb{R}$ and $f(z)=1/z$ with $f(0)=0$ then $\Gamma_{f}^{<}$ has multiple path components while $Z$ is path-connected.
  Either way, this is not an issue in our case.

  \begin{theorem}
    \label{theorem:mainTheoremNumbered}
    Suppose $X$ is a smooth projective surface over $\mathbb{C}$.
    The manifold $\operatorname{Stab}^{\mathrm{geo}}(X)$ is contractible.
  \end{theorem}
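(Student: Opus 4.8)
The plan is to combine Theorem \ref{contractabilityMainResult} with Lemma \ref{lemma:contractibleContinuous}. By Theorem \ref{contractabilityMainResult}, $\operatorname{Stab}^{\mathrm{geo}}(X)$ is homeomorphic to $\mathbb{C}\times\Gamma_{\Phi}^{<}$, where $\Gamma_{\Phi}^{<}=\{(H,D,\beta,\alpha):\Phi(H,D,\beta)<\alpha\}$ is exactly the set $\Gamma_f^{<}$ of Lemma \ref{lemma:contractibleContinuous} for $Z=\operatorname{Amp}_{\mathbb{R}}(X)\times\operatorname{NS}_{\mathbb{R}}(X)\times\mathbb{R}$ and $f=\Phi$. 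Since $\mathbb{C}$ is contractible and $Z$ is a convex (hence contractible) subset of a finite-dimensional real vector space, it suffices to produce a continuous function $g:Z\to\mathbb{R}$ with $\Phi(H,D,\beta)\le g(H,D,\beta)$ everywhere; then Lemma \ref{lemma:contractibleContinuous} gives $\Gamma_{\Phi}^{<}\simeq Z$, and hence $\operatorname{Stab}^{\mathrm{geo}}(X)\cong\mathbb{C}\times\Gamma_{\Phi}^{<}\simeq \mathbb{C}\times Z$ is contractible.

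So the real content is the construction of a continuous majorant $g$ for the Le Potier function $\Phi$. First I would recall that $\Phi$ is built from the Chern characters of $H$-slope-stable sheaves, and that the classical Bogomolov inequality $\operatorname{ch}_1(\mathscr{E})^2\ge 2\operatorname{rank}(\mathscr{E})\operatorname{ch}_2(\mathscr{E})$ for such sheaves controls $\operatorname{ch}_2$ from above in terms of $\operatorname{ch}_1$ and the rank. Applying the Hodge index theorem to the ample class $H$ and the divisor class $\operatorname{ch}_1(\mathscr{E})/\operatorname{rank}(\mathscr{E})$, one bounds $(\operatorname{ch}_1(\mathscr{E})/\operatorname{rank}(\mathscr{E}))^2$ from above by $(H\cdot\operatorname{ch}_1(\mathscr{E})/\operatorname{rank}(\mathscr{E}))^2/H^2 = \mu^2/H^2$. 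Combining these, the quantity $(\operatorname{ch}_2(\mathscr{E})-D\cdot\operatorname{ch}_1(\mathscr{E}))/(H^2\operatorname{rank}(\mathscr{E}))$ whose $\limsup$ as $\mu\to\beta$ defines $\Phi$ is bounded above by an expression that, for $\mu$ near $\beta$, is controlled by a polynomial in $\beta$, the entries of $D$, and $H$, with the dependence on $H$ entering through $H^2$ and pairings $H\cdot D$; in particular it is bounded above by an explicit continuous (indeed semialgebraic/polynomial) function $g(H,D,\beta)$ on $Z$. One must be a little careful that the Hodge-index bound is applied to the class $\operatorname{ch}_1(\mathscr{E})/\operatorname{rank}(\mathscr{E})$, which lies in $\operatorname{NS}_{\mathbb{R}}(X)$ and satisfies $H\cdot(\operatorname{ch}_1(\mathscr{E})/\operatorname{rank}(\mathscr{E}))=\mu$; since we are taking $\limsup_{\mu\to\beta}$, only $\mu$ in a bounded neighborhood of $\beta$ matters, so the resulting bound can be made uniform and continuous in $(H,D,\beta)$.

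The main obstacle I expect is purely bookkeeping: writing down an honest formula for $g$ that is manifestly continuous on all of $Z=\operatorname{Amp}_{\mathbb{R}}(X)\times\operatorname{NS}_{\mathbb{R}}(X)\times\mathbb{R}$, including uniformity of the $\limsup_{\mu\to\beta}$. Concretely, from Bogomolov and Hodge index one gets, for an $H$-slope-stable $\mathscr{E}$ of slope $\mu$,
\[
  \frac{\operatorname{ch}_2(\mathscr{E})-D\cdot\operatorname{ch}_1(\mathscr{E})}{H^2\operatorname{rank}(\mathscr{E})}
  \le \frac{1}{2H^2}\left(\frac{\mu^2}{H^2}\right) - \frac{D\cdot\operatorname{ch}_1(\mathscr{E})}{H^2\operatorname{rank}(\mathscr{E})},
\]
and the last term is again handled by Hodge index applied to $D$ and $\operatorname{ch}_1(\mathscr{E})/\operatorname{rank}(\mathscr{E})$ (writing $D=\lambda H + D^{\perp}$ with $D^{\perp}\cdot H=0$, so that $D\cdot\operatorname{ch}_1/\operatorname{rank}=\lambda\mu + D^{\perp}\cdot(\operatorname{ch}_1/\operatorname{rank})$ and $(D^{\perp}\cdot(\operatorname{ch}_1/\operatorname{rank}))^2\le (D^{\perp})^2\cdot(\operatorname{ch}_1/\operatorname{rank})^2\le 0$ forces $D^{\perp}\cdot(\operatorname{ch}_1/\operatorname{rank})=0$ when $D^{\perp}$ is not numerically trivial, and in general is bounded). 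Taking the $\limsup$ as $\mu\to\beta$ replaces $\mu$ by $\beta$ up to an arbitrarily small error, yielding a bound of the form $g(H,D,\beta)=\tfrac{\beta^2}{2(H^2)^2}+|\lambda\beta|+C(H,D)$ for a continuous $C$; after enlarging $g$ by an additive constant this dominates $\Phi$. Once $g$ is in hand the rest is immediate from Lemma \ref{lemma:contractibleContinuous} and the contractibility of $\mathbb{C}$ and of the convex set $\operatorname{Amp}_{\mathbb{R}}(X)\times\operatorname{NS}_{\mathbb{R}}(X)\times\mathbb{R}$.
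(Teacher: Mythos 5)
Your overall architecture is exactly the paper's: apply Theorem \ref{contractabilityMainResult}, feed the Le Potier function into Lemma \ref{lemma:contractibleContinuous} via a continuous majorant, and finish with the contractibility of $\mathbb{C}\times\operatorname{Amp}_{\mathbb{R}}(X)\times\operatorname{NS}_{\mathbb{R}}(X)\times\mathbb{R}$. The only difference is that the paper does not derive the majorant: it quotes \cite{dell2023}*{Lemma 4.6}, which gives precisely $\Phi(H,D,\beta)\leq\tfrac12\bigl((\beta-\tfrac{D\cdot H}{H^2})^2-\tfrac{D^2}{H^2}\bigr)$, whereas you attempt to reprove it from Bogomolov plus Hodge index. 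That is a reasonable thing to do, but your derivation of the bound has a genuine error in the treatment of the $D$-term.

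Concretely, write $c=\operatorname{ch}_1(\mathscr{E})/\operatorname{rank}(\mathscr{E})$ and decompose $c=\tfrac{\mu}{H^2}H+c^{\perp}$ with $c^{\perp}\in H^{\perp}$. Your claim that $(D^{\perp}\cdot c)^2\le (D^{\perp})^2c^2\le 0$, forcing $D^{\perp}\cdot c=0$ or at worst a bounded quantity, is false: the signed Cauchy--Schwarz inequality you are invoking does not apply to the pair $(D^{\perp},c)$ since $c\notin H^{\perp}$, and the correct estimate $|D^{\perp}\cdot c|=|D^{\perp}\cdot c^{\perp}|\le\sqrt{-(D^{\perp})^2}\sqrt{-(c^{\perp})^2}$ is \emph{not} uniformly bounded over slope-stable sheaves of slope near $\beta$, because fixing $\mu$ does not bound $(c^{\perp})^2$ from below. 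Worse, your chain of inequalities has already discarded the term $(c^{\perp})^2\le 0$ by replacing $c^2$ with $\mu^2/H^2$ in the first step, and it is exactly this discarded negative term that compensates the unbounded cross term. The fix is to complete the square before applying Hodge index: Bogomolov gives
\[
\frac{\operatorname{ch}_2(\mathscr{E})-D\cdot\operatorname{ch}_1(\mathscr{E})}{H^2\operatorname{rank}(\mathscr{E})}\le\frac{1}{2H^2}\left(c^2-2D\cdot c\right)=\frac{1}{2H^2}\left((c-D)^2-D^2\right),
\]
and then the Hodge index inequality applied to the single class $c-D$ (whose $H$-degree is $\mu-D\cdot H$) yields $(c-D)^2\le(\mu-D\cdot H)^2/H^2$, producing a continuous majorant of the shape of the cited lemma after letting $\mu\to\beta$. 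With that correction (or simply by citing \cite{dell2023}*{Lemma 4.6} as the paper does), the rest of your argument goes through verbatim.
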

  \begin{proof}
    By \cite{dell2023}*{Lemma 4.6}, we have
    \[
      \Phi(H,D,\beta)\leq \frac{1}{2}\left(\left(\beta-\frac{D\cdot H}{H^{2}}\right)^{2}-\frac{D^{2}}{H^{2}}\right)
    \]
    for all $(H,D,\beta)\in\operatorname{Amp}_{\mathbb{R}}(X)\times\operatorname{NS}_{\mathbb{R}}(X)\times\mathbb{R}$ where $\Phi$ is the generalized Le Potier function (see Definition \ref{definition:generalizedLePotier}).
    Therefore, by Theorem \ref{contractabilityMainResult} and Lemma \ref{lemma:contractibleContinuous}, it suffices to show $\operatorname{Amp}_{\mathbb{R}}(X)\times\operatorname{NS}_{\mathbb{R}}(X)\times\mathbb{R}$ is contractible.
    
    Since $\operatorname{NS}(X)$ is a finitely generated abelian group, there is a homeomorphism $\operatorname{NS}_{\mathbb{R}}(X)=\mathbb{R}^{m}$ for some $m\geq 1$.
    Similarly, since $\operatorname{Amp}_{\mathbb{R}}(X)$ is an open cone in $\operatorname{NS}_{\mathbb{R}}(X)=\mathbb{R}^{m}$, $\operatorname{Amp}_{\mathbb{R}}(X)$ is also contractible.
    Thus, $\operatorname{NS}_{\mathbb{R}}(X)\times\operatorname{Amp}_{\mathbb{R}}(X)\times\mathbb{R}$ is contractible and so $\operatorname{Stab}^{\mathrm{geo}}(X)$ is also contractible, as desired.
  \end{proof}
  
  We use Theorem \ref{theorem:mainTheoremNumbered} to show any smooth projective complex surface with finite Albanese has contractible stability manifold.
  This result generalizes \cite{fu2022}*{Theorem 1.2} from Picard rank one to arbitrary Picard rank.
  
  \begin{corollary}
    \label{corollary:finiteAlbanese}
    If $X$ is a surface with finite Albanese (i.e. the Albanese morphism $\operatorname{alb}_{X}:X\to\operatorname{Alb}(X)$ is finite onto its image) then $\operatorname{Stab}(X)$ is contractible.
  \end{corollary}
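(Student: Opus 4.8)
The plan is to combine Theorem~\ref{theorem:mainTheoremNumbered} with the cited result of Fu--Li--Zhao (\cite{fu2022}*{Theorem 1.1}), which asserts that when $X$ is a smooth projective complex surface whose Albanese morphism is finite onto its image, one has $\operatorname{Stab}^{\mathrm{geo}}(X)=\operatorname{Stab}(X)$. Granting that identity, contractibility of $\operatorname{Stab}(X)$ is immediate from contractibility of $\operatorname{Stab}^{\mathrm{geo}}(X)$, which is exactly the content of Theorem~\ref{theorem:mainTheoremNumbered}. So the proof is essentially a one-line deduction: cite \cite{fu2022}*{Theorem 1.1} to get $\operatorname{Stab}(X)=\operatorname{Stab}^{\mathrm{geo}}(X)$, then apply Theorem~\ref{theorem:mainTheoremNumbered}.

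The only point requiring a sentence of care is verifying the hypothesis of the Fu--Li--Zhao theorem matches the hypothesis stated here. Their result is phrased for varieties with ``finite Albanese,'' and the parenthetical in the statement of Corollary~\ref{corollary:finiteAlbanese} makes explicit that this means $\operatorname{alb}_X\colon X\to\operatorname{Alb}(X)$ is finite onto its image; these are the same condition, so nothing needs to be checked beyond noting the terminology agrees. One should also recall that $\operatorname{Stab}^{\mathrm{geo}}(X)$ is a (possibly empty, possibly disconnected) open submanifold of $\operatorname{Stab}(X)$, but for surfaces it is known to be nonempty (it is cut out by $\Phi(H,D,\beta)<\alpha$, and the bound in the proof of Theorem~\ref{theorem:mainTheoremNumbered} shows this region is nonempty), so the equality $\operatorname{Stab}^{\mathrm{geo}}(X)=\operatorname{Stab}(X)$ is a genuine identification of the whole stability manifold rather than a vacuous statement.

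There is no real obstacle here: all the work has been front-loaded into Theorem~\ref{theorem:mainTheoremNumbered} (which in turn rests on the structural description in Theorem~\ref{contractabilityMainResult}, the Le Potier bound of \cite{dell2023}*{Lemma 4.6}, and the elementary Lemma~\ref{lemma:contractibleContinuous}) and into the cited comparison theorem \cite{fu2022}*{Theorem 1.1}. If anything, the ``hard part'' is purely expository: making clear that the generalization over \cite{fu2022}*{Theorem 1.2} is precisely the removal of the Picard-rank-one hypothesis, which was needed there only to establish contractibility of $\operatorname{Stab}^{\mathrm{geo}}(X)$ and is now superseded by Theorem~\ref{theorem:mainTheoremNumbered}.

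\begin{proof}
  By \cite{fu2022}*{Theorem 1.1}, since the Albanese morphism $\operatorname{alb}_X\colon X\to\operatorname{Alb}(X)$ is finite onto its image, we have $\operatorname{Stab}^{\mathrm{geo}}(X)=\operatorname{Stab}(X)$. By Theorem \ref{theorem:mainTheoremNumbered}, $\operatorname{Stab}^{\mathrm{geo}}(X)$ is contractible. Therefore $\operatorname{Stab}(X)$ is contractible.
\end{proof}
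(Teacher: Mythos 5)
Your proof is correct and is essentially identical to the paper's: both cite \cite{fu2022}*{Theorem 1.1} to identify $\operatorname{Stab}(X)$ with $\operatorname{Stab}^{\mathrm{geo}}(X)$ under the finite-Albanese hypothesis, then apply Theorem~\ref{theorem:mainTheoremNumbered}. The extra remarks on nonemptiness and on the comparison with \cite{fu2022}*{Theorem 1.2} are accurate but not needed for the deduction.
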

  \begin{proof}
    Since the Albanese morphism of $X$ is finite, there is a homeomorphism $\operatorname{Stab}^{\mathrm{geo}}(X)=\operatorname{Stab}(X)$~\cite{fu2022}*{Theorem 1.1}.
    Therefore, by Theorem \ref{theorem:mainTheoremNumbered}, $\operatorname{Stab}(X)$ is contractible.
  \end{proof}
  
  We note Corollary \ref{corollary:finiteAlbanese} applies to infinitely many new families of varieties .
  
  \begin{example}
    A variety $X$ has finite Albanese if and only if $X$ is a finite cover of a subvariety of an abelian variety.
   Therefore, any product of varieties with finite Albanese also has finite Albanese.
    For example, if $X$ is a finite cover of $C_{1}\times C_{2}$ for smooth positive genus curves $C_{1},C_{2}$ then $\operatorname{Stab}(X)$ is contractible.
    The Picard rank of $C_{1}\times C_{2}$ is larger than $1$ and so \cite{fu2022}*{Theorem 1.2} does not apply.
  \end{example}
  
  A recent result of Dell--Heng--Licata has also used Theorem \ref{theorem:mainTheoremNumbered} to quotients of varieties appearing in Corollary \ref{corollary:finiteAlbanese}.
  Specifically, if $X$ is a smooth projective surface with finite Albanese and $G$ is a finite group acting freely on $X$ then a distinguished component of $\operatorname{Stab}(X/G)$ is contractible~\cite{dell2024}*{Corollary 5.10}.
  
  The author is hopeful Theorem \ref{theorem:mainTheoremNumbered} generalizes to smooth projective complex threefolds where geometric stability conditions are known to exist.
  However, the techniques of this write-up do not naively extend because the proof of Theorem \ref{contractabilityMainResult} fails for threefolds.
  Namely, for threefolds geometric stability conditions are no longer determined by their central charge.
  For this reason, a new idea is necessary to extend the results of this article to threefolds.

\begin{bibdiv}
\begin{biblist}
\bib{bayer2011}{article}{
  author={Bayer, Arend},
  author={Macr\`{i}, Emanuele},
  title={The space of stability conditions on the local projective plane},
  journal={Duke Math. J.},
  volume={160},
  date={2011},
  number={2},
  pages={263--322},
  issn={0012-7094},
  review={\MR{2852118}},
  doi={10.1215/00127094-1444249},
}
\bib{bayer2017}{article}{
   author={Bayer, Arend},
   author={Bridgeland, Tom},
   title={Derived automorphism groups of K3 surfaces of Picard rank 1},
   journal={Duke Math. J.},
   volume={166},
   date={2017},
   number={1},
   pages={75--124},
   issn={0012-7094},
   review={\MR{3592689}},
   doi={10.1215/00127094-3674332},
}
\bib{bridgeland2007}{article}{
  author={Bridgeland, Tom},
  title={Stability conditions on triangulated categories},
  journal={Ann. of Math. (2)},
  volume={166},
  date={2007},
  number={2},
  pages={317--345},
  issn={0003-486X},
  review={\MR{2373143}},
  doi={10.4007/annals.2007.166.317},
}
\bib{bridgeland2008}{article}{
   author={Bridgeland, Tom},
   title={Stability conditions on $K3$ surfaces},
   journal={Duke Math. J.},
   volume={141},
   date={2008},
   number={2},
   pages={241--291},
   issn={0012-7094},
   review={\MR{2376815}},
   doi={10.1215/S0012-7094-08-14122-5},
}
\bib{bridgeland2020}{article}{
 author={Bridgeland, Tom},
 author={Qiu, Yu},
 author={Sutherland, Tom},
 title={Stability conditions and the $A_2$ quiver},
 journal={Adv. Math.},
 volume={365},
 date={2020},
 pages={107049, 33},
 issn={0001-8708},
 review={\MR{4064773}},
 doi={10.1016/j.aim.2020.107049},
}
\bib{broomhead2016}{article}{
  author={Broomhead, Nathan},
  author={Pauksztello, David},
  author={Ploog, David},
  title={Discrete derived categories II: the silting pairs CW complex and the stability manifold},
  journal={J. Lond. Math. Soc. (2)},
  volume={93},
  date={2016},
  number={2},
  pages={273--300},
  issn={0024-6107},
  review={\MR{3483114}},
  doi={10.1112/jlms/jdv069},
}
\bib{chen2020}{article}{
  author={Chen, Zihong},
  title={Stability conditions and exceptional objects in triangulated
  categories},
  journal={Math. Res. Lett.},
  volume={27},
  date={2020},
  number={4},
  pages={945--971},
  issn={1073-2780},
  review={\MR{4216575}},
  doi={10.4310/MRL.2020.v27.n4.a1},
}
\bib{dell2023}{article}{
  author={Dell, Hannah},
  title={Stability conditions on free abelian quotients},
  eprint={https://arxiv.org/pdf/2307.00815.pdf},
  year={2023}
}
\bib{dell2024}{article}{
  author={Dell, Hannah},
  author={Heng, Edmund},
  author={Licata, Anthony M.},
  title={Fusion-equivariant stability conditions and morita duality},
  eprint={https://arxiv.org/pdf/2311.06857},
  year={2024}
}
\bib{dimitrov2016}{article}{
   author={Dimitrov, George},
   author={Katzarkov, Ludmil},
   title={Non-semistable exceptional objects in hereditary categories},
   journal={Int. Math. Res. Not. IMRN},
   date={2016},
   number={20},
   pages={6293--6377},
   issn={1073-7928},
   review={\MR{3579966}},
   doi={10.1093/imrn/rnv336},
}
\bib{dimitrov2018}{article}{
  author={Dimitrov, George},
  author={Katzarkov, Ludmil},
  title={Bridgeland stability conditions on the acyclic triangular quiver},
  journal={Adv. Math.},
  volume={288},
  date={2016},
  pages={825--886},
  issn={0001-8708},
  review={\MR{3436400}},
  doi={10.1016/j.aim.2015.10.014},
}
\bib{dimitrov2019}{article}{
   author={Dimitrov, George},
   author={Katzarkov, Ludmil},
   title={Bridgeland stability conditions on wild Kronecker quivers},
   journal={Adv. Math.},
   volume={352},
   date={2019},
   pages={27--55},
   issn={0001-8708},
   review={\MR{3959651}},
   doi={10.1016/j.aim.2019.05.032},
}
\bib{fan2023}{article}{
  author={Fan, Changping},
  author={Liu, Zhiyu},
  author={Ma, Songtao},
  title={Stability manifolds of kuzentsov components of prime Fano threefolds},
  date={2023},
  eprint={https://arxiv.org/pdf/2310.16950}
}
\bib{fu2022}{article}{
   author={Fu, Lie},
   author={Li, Chunyi},
   author={Zhao, Xiaolei},
   title={Stability manifolds of varieties with finite Albanese morphisms},
   journal={Trans. Amer. Math. Soc.},
   volume={375},
   date={2022},
   number={8},
   pages={5669--5690},
   issn={0002-9947},
   review={\MR{4469233}},
   doi={10.1090/tran/8651},
}
\bib{haiden2014}{article}{
  title={Flat surfaces and stability structures},
  author={Haiden, Fabian},
  author={Katzarkov, Ludmil},
  author={Kontsevich, Maxim},
  eprint={https://arxiv.org/pdf/1409.8611.pdf},
  year={2014}
}
\bib{huybrechts2008}{article}{
   author={Huybrechts, Daniel},
   author={Macr\`{i}, Emanuele},
   author={Stellari, Paolo},
   title={Stability conditions for generic $K3$ categories},
   journal={Compos. Math.},
   volume={144},
   date={2008},
   number={1},
   pages={134--162},
   issn={0010-437X},
   review={\MR{2388559}},
   doi={10.1112/S0010437X07003065},
}
\bib{li2017}{article}{
   author={Li, Chunyi},
   title={The space of stability conditions on the projective plane},
   journal={Selecta Math. (N.S.)},
   volume={23},
   date={2017},
   number={4},
   pages={2927--2945},
   issn={1022-1824},
   review={\MR{3703470}},
   doi={10.1007/s00029-017-0352-4},
}
\bib{macri2007}{article}{
   author={Macr\`{i}, Emanuele},
   title={Stability conditions on curves},
   journal={Math. Res. Lett.},
   volume={14},
   date={2007},
   number={4},
   pages={657--672},
   issn={1073-2780},
   review={\MR{2335991}},
   doi={10.4310/MRL.2007.v14.n4.a10},
}
\bib{macri2007b}{article}{
  author={Macr\`{i}, Emanuele},
  title={Some examples of spaces of stability conditions on derived categories},
  date={2007},
  eprint={https://arxiv.org/abs/math/0411613}
}
\bib{okada2006}{article}{
   author={Okada, So},
   title={Stability manifold of $\mathbb{P}^1$},
   journal={J. Algebraic Geom.},
   volume={15},
   date={2006},
   number={3},
   pages={487--505},
   issn={1056-3911},
   review={\MR{2219846}},
   doi={10.1090/S1056-3911-06-00432-2},
}
\bib{otani2022}{article}{
  author={Otani, Takumi},
  title={Full exceptional collections and stability conditions for Dynkin quivers},
  date={2022},
  eprint={https://arxiv.org/pdf/2210.08479.pdf}
}
\bib{pertusi2022}{article}{
   author={Pertusi, Laura},
   author={Yang, Song},
   title={Some remarks on Fano three-folds of index two and stability
   conditions},
   journal={Int. Math. Res. Not. IMRN},
   date={2022},
   number={17},
   pages={12940--12983},
   issn={1073-7928},
   review={\MR{4475280}},
   doi={10.1093/imrn/rnaa387},
}
\bib{qiu2015}{article}{
   author={Qiu, Yu},
   title={Stability conditions and quantum dilogarithm identities for Dynkin
   quivers},
   journal={Adv. Math.},
   volume={269},
   date={2015},
   pages={220--264},
   issn={0001-8708},
   review={\MR{3281136}},
   doi={10.1016/j.aim.2014.10.014},
}
\bib{qiu2018}{article}{
  author={Qiu, Yu},
  author={Woolf, Jon},
  title={Contractible stability spaces and faithful braid group actions},
  journal={Geom. Topol.},
  volume={22},
  date={2018},
  number={6},
  pages={3701--3760},
  issn={1465-3060},
  review={\MR{3858773}},
  doi={10.2140/gt.2018.22.3701},
}
\end{biblist}
\end{bibdiv}
\end{document}